\newtheorem{lem}{Lemma}[section]
\newtheorem{thm}{Theorem}[section]
\begin{document}
\title{Decomposition of Cartesian Product of Complete Graphs into Sunlet Graphs of Order Eight}
\author{{K. Sowndhariya and A. Muthusamy} \\ 
{\footnotesize Department
of Mathematics, Periyar University, Salem,} \\ 
{\footnotesize
Tamil Nadu, India.} \\ {\footnotesize \emph{ email:sowndhariyak@gmail.com; ambdu@yahoo.com}}}
\date{}
\maketitle
\begin{abstract}
For any integer $k\geq 3$ , we define the sunlet graph of order $2k$, denoted by $L_{2k}$, as the graph consisting of a cycle of length $k$ together with $k$ pendant vertices such that, each pendant vertex adjacent to exactly one vertex of the cycle so that the degree of each vertex in the cycle is $3$. In this paper, we establish necessary and sufficient conditions for the existence of decomposition of the Cartesian product of complete graphs into sunlet graph of order eight.
\end{abstract}
{\bf 2010 Mathematics Subject Classification:} 05C51 \\
{\bf Keywords:} Graph decomposition, Cartesian Product, Corona graph, Sunlet graph
%%%%%%%%%%%%%%%%%%%%%%%%%%%%%%%%%%%%%%%%%%%%%%%%%%%%%%%%%%%%%%%%%%%%%%%%%%%%%%%%%%%%%%%%%%%%%%%%%%%%%%%%%%%%%%%%%%%%%%%%%%%%%%%%%%%%%%%%%%%%%%%%%%%%%%%%%%%%%%%
\section{Introduction}
All graphs considered here are finite, simple and undirected. A \textit{cycle} of length $k$ is called $k$-cycle and it is denoted by $C_{k}$. $K_{m}$ denotes the \textit{complete} graph on $m$ vertices and $K_{m,n}$ denotes the \textit{complete bipartite} graph with $m$ and $n$ vertices in the parts. We denote the \textit{complete $m$-partite} graph with $n_{1}, n_{2}, \dots , n_{m}$ vertices in the parts by $K_{n_{1}, n_{2}, \dots , n_{m}}$. For any integer $\lambda > 0$, $\lambda G$ denotes the graph consisting of $\lambda$ edge-disjoint copies of $G$.
\par For any two graphs $G$ and $H$ of orders $m$ and $n$, respectively, the \textit{corona product} $G \odot H$ is the graph obtained by taking one copy of $G$ and $m$ copies of $H$ such that the $i$th vertex of $G$ is connected to every vertex in the $i$th copy of $H$. We define the \textit{sunlet graph} $L_{2k}$ with  $V(L_{2k})$ $=$ $\left\{x_{1}, x_{2}, \dots , x_{k}, x_{k+1}, x_{k+2}, \dots , x_{2k}\right\}$ and $E(L_{2k})$ $=$ $\left\{x_{i}x_{i+1} \cup x_{i}x_{k+i} \ | \ i = 1,2,. . ., k \ \textnormal{and subscripts of the first term is taken addition  modulo} \ k\right\}$. We denote it by  
\begin{center} $L_{2k}$ $=$ $\begin{pmatrix}x_{1} & x_{2} & \dots & x_{k} \\ x_{k+1} & x_{k+2} & \dots & x_{2k}\end{pmatrix}$. \end{center} Clearly, $L_{2k} \cong C_{k} \odot K_{1}$. 
\par The \textit{Cartesian product} of two graphs, $G$ and $H$, denotes by $G \Box H$, has the vertex set $V(G) \times V(H)$ and in which two vertices $(g, h)$ and $(g', h')$ are adjacent if and only if either $g = g'$ and $h$ is adjacent to $h'$ in $H$ or $h = h'$ and $g$ is adjacent to $g'$ in $G$. It is well known that  Cartesian product is commutative, associative and distributive over edge-disjoint union of graphs. 
\par We shall use the following notation throughout the paper. Let $G$ and $H$ be simple graphs with vertex sets $V(G)$ $=$ $\left\{x_{1}, x_{2}, \dots , x_{n}\right\}$ and $V(H)$ $=$ $\left\{y_{1}, y_{2}, \dots , y_{m}\right\}$. Then for our convenience, we write $V(G) \times V(H)$ $=$ $\bigcup_{i = 1}^{n}$ $X_{i}$, where $X_{i}$ stands for $x_{i} \times V(H)$. Further, in the sequel, we shall denote the vertices of $X_{i}$ as $\left\{x_{i}^{j} | 1 \leq j \leq m\right\}$, where $x_{i}^{j}$ stands for the vertex $(x_{i}, y_{j}) \in V(G) \times V(H)$. 
%\par A labeling of a graph $G$ with $n$ edges is an injection $\rho$ from $V(G)$, the vertex set of $G$, into a subset $S$ of the set $\left\{0, 1, 2, \dots , 2n\right\}$ of elements of the additive group $Z_{2n+1}$. The length of an edge $e = xy$ with end vertices $x$ and $y$ is defined as $l(xy) = min\left\{\rho(x) - \rho(y), \rho(y) - \rho(x)\right\}$. Notice that the subtraction is performed in $Z_{2n+1}$ and hence $1 \leq l(e) \leq n$. If the set of all lengths of the $n$ edges is equal to $\left\{1, 2, \dots, n\right\}$, then $\rho$ is a rosy labeling; if moreover $S \subseteq \left\{0, 1, 2, \dots, n\right\}$, then $\rho$ is a graceful labeling. A graceful labeling $\alpha$ is said to be an $\alpha$-labeling if there exists a number $\alpha_{0}$ with the property that for every edge $e$ in $G$ with end vertices $x$ and $y$ and with $\alpha(x) < \alpha(y)$ it holds that $\alpha(x) \leq \alpha_{0} < \alpha(y)$.
\par By a \textit{decomposition} of a graph $G$, we mean a list of edge-disjoint subgraphs whose union is $G$. For a graph $G$, if $E(G)$ can be partitioned into $E_{1}, E_{2}, . . . , E_{k}$ such that the subgraph induced by $E_{i}$ is $H_{i}$, for all $i$, $1 \leq i \leq k$, then we say that $H_{1}, H_{2}, . . . , H_{k}$ decompose $G$ and we write $G$ $=$ $H_{1} \ \oplus \ H_{2} \ \oplus . . . \ \oplus \ H_{k}$, since $H_{1}, H_{2}, . . . , H_{k}$ are edge-disjoint subgraphs of $G$. For $1 \leq i \leq k$, if $H_{i}$ $=$ $H$, we say that $G$ has a \textit{$H$- decomposition}.
\par Study of $H$-decomposition of graphs is not new. Many authors around the world are working in the field of cycle decomposition, path decomposition, star decompositon, trail decomposition, Hamilton cycle decomposition problems in graphs. Here we consider the sunlet decomposition of product graphs. Anitha and Lekshmi $\cite{AL1, AL}$ proved that $n$-sun decomposition of complete graph, complete bipartite graph and the Harary graphs. Liang and Guo $\cite{LG, LGW}$ gave the existence spectrum of a $k$-sun system of order $v$ as $k = 2, 4, 5, 6, 8$. Fu et. al. $\cite{FJLS, FJLS1}$ obtained that $3$-sun decompositions of $K_{p,p,r}$, $K_{n}$and embed a cyclic steiner triple system of order $n$ into a $3$-sun system of order $2n-1$, for $n = 1\pmod6$. Further they obtained $k$-sun system when $k = 6, 10, 14, 2^{t},$ for $t>1$. Fu et. al. $\cite{FHL}$ obtained the existence of a $5$-sun system of order $v$. Gionfriddo et.al. \cite{GFMT} obtained the spectrum for uniformly resolvable decompositions of $K_{v}$ into $1$-factor and $h$-suns. Akwu and Ajayi $\cite{AA}$ obtained the necessary and sufficient conditions for the existence of decomposition of $K_{n} \otimes \overline{K_{m}}$ and $(K_{n} - I) \otimes \overline{K_{m}}$, where $I$ denote the $1$-factor of a complete graph into sunlet graph of order twice the prime. Sowndhariya and Muthusamy $\cite{SM}$ obtained necessary and sufficient conditions for the existence of decomposition of $K_{m} \times K_{n}$ and $K_{m} \otimes \overline{K_{n}}$ into sunlet graph of order eight. 
\par In this paper, we prove the existence of $L_{8}$-decomposition of $K_{m} \Box K_{n}$. In fact, we establish necessary and sufficient conditions for the existence of $L_{8}$-decomposition of $K_{m} \Box K_{n}$. To prove our results, we state the following:
\begin{thm}[\cite{SM}]\label{1.1} 
For $k \geq 3$, $K_{4k+1}$ has a $L_{2k}$- decomposition.
\end{thm}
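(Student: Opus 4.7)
The plan is to use the rotational difference method on the cyclic group $\mathbb{Z}_{4k+1}$. A quick edge count gives $|E(K_{4k+1})| = 2k(4k+1)$ and $|E(L_{2k})| = 2k$, so any $L_{2k}$-decomposition of $K_{4k+1}$ must contain exactly $4k+1$ copies. The fact that this number matches the order of the graph strongly suggests exhibiting a single base copy $L^{*}$ of $L_{2k}$ whose $4k+1$ cyclic translates partition $E(K_{4k+1})$.

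I would identify $V(K_{4k+1})$ with $\mathbb{Z}_{4k+1}$. Every edge $\{a,b\}$ has a well-defined difference $d(a,b) = \min(|a-b|,\,4k+1-|a-b|) \in \{1, 2, \ldots, 2k\}$, and for each such $d$ there are exactly $4k+1$ edges of that difference. Consequently the orbit of $L^{*}$ under the shift $i \mapsto i+1$ yields a decomposition if and only if the $2k$ edges of $L^{*}$ realize each difference $1, 2, \ldots, 2k$ exactly once. The task thus reduces to producing a single base sunlet with this \emph{starter} property.

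For the construction I would write the cycle vertices of $L^{*}$ as $a_{0}=0, a_{1}, \ldots, a_{k-1}$ and the pendants as $b_{0}, b_{1}, \ldots, b_{k-1}$, where $b_{i}$ is adjacent to $a_{i}$. A natural strategy is to choose the $a_{i}$ so that the $k$ cycle-edge differences cover an explicit subset $D_{1} \subseteq \{1, \ldots, 2k\}$ of size $k$; for instance $a_{i} = \binom{i+1}{2}$ yields interior cycle differences $1, 2, \ldots, k-1$, leaving one closing edge whose difference can be computed directly. The pendant offsets $b_{i} - a_{i}$ are then drawn, with appropriate signs, from the complementary set $D_{2} = \{1, \ldots, 2k\} \setminus D_{1}$.

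The main obstacle will be carrying out this construction uniformly for every $k \geq 3$. Ensuring simultaneously that (i) the closing cycle edge lands in the prescribed difference class, (ii) the pendant vertices are distinct from each other and from the cycle vertices modulo $4k+1$, and (iii) each of the $2k$ differences is hit exactly once, is a combinatorial puzzle that will likely require splitting by the parity of $k$ (or even finer residue classes) and exhibiting a few separate starter families, possibly with ad hoc handling of small values such as $k=3,4,5$. Once such base sunlets are in hand, the theorem follows at once from the cyclic orbit argument.
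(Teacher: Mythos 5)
Your reduction is sound and is the standard route for a result of this shape: identifying $V(K_{4k+1})$ with $\mathbb{Z}_{4k+1}$, noting that $|E(K_{4k+1})| = 2k(4k+1)$ forces exactly $4k+1$ blocks, and observing that the cyclic translates of a single base copy $L^{*}$ decompose $K_{4k+1}$ precisely when the $2k$ edges of $L^{*}$ realize each difference $1,\dots,2k$ exactly once (the orbit of an edge of difference $d$ has full length $4k+1$ because the group has odd order, so no edge is stabilized). All of that is correct, and it is the natural framework for proving the theorem.

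The gap is that the entire content of the theorem lies in actually exhibiting such a starter for every $k \ge 3$, and you have not done so; you explicitly defer it as ``a combinatorial puzzle.'' Moreover, the one concrete candidate you float does not work in general: with $a_i = \binom{i+1}{2}$ the closing cycle edge has difference $\binom{k}{2}$ reduced modulo $4k+1$, and this collides with the interior differences $1,\dots,k-1$ for many $k$ (for $k=9$ one gets $\binom{9}{2}=36\equiv -1 \pmod{37}$, i.e.\ difference $1$, already used; for $k=8$ one gets difference $5$, likewise already used). The pendant offsets, the choice of signs, and the verification that all $2k$ vertices of the base block are pairwise distinct modulo $4k+1$ are left entirely open. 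What you have, therefore, is a correct reduction of the theorem to the existence of a starter sunlet, not a proof: the construction of explicit starter families valid for all $k\ge 3$ (presumably split into residue classes of $k$, as in the source \cite{SM} from which this theorem is quoted) is the missing and essential step.
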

\begin{thm}[\cite{SM}]\label{1.2} 
For an integer $p > 0$, $K_{16p+1}$ has a $L_{8}$- decomposition.
\end{thm}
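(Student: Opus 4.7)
The first thing to check is the necessary edge-count: $|E(K_{16p+1})| = \binom{16p+1}{2} = 8p(16p+1)$, which is divisible by $|E(L_8)| = 8$. The case $p = 1$ is exactly Theorem~\ref{1.1} applied with $k = 4$, so we may assume $p \geq 2$.

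For general $p$ my plan is to reduce to two simpler building blocks via a ``pyramid'' decomposition. Fix a distinguished vertex $\infty$ and partition the remaining $16p$ vertices into blocks $V_1, \ldots, V_p$ of size $16$. Then the edges of $K_{16p+1}$ split, edge-disjointly, into $p$ copies of $K_{17}$ (one on each $\{\infty\} \cup V_i$, all sharing only the vertex $\infty$) together with $\binom{p}{2}$ copies of $K_{16,16}$ (one between each pair $V_i, V_j$ with $i < j$). Each $K_{17}$ is already decomposable by Theorem~\ref{1.1} with $k = 4$, so the entire problem reduces to producing an $L_8$-decomposition of $K_{16,16}$.

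This last step is the main technical obstacle. Counting edges says $K_{16,16}$ must split into exactly $32$ sunlets, and a degree calculation on each part shows that every vertex there has to occur in four sunlets as a cycle-vertex (of degree $3$) and in four sunlets as a pendant (of degree $1$). My approach would be a difference construction: label both parts by $\mathbb{Z}_{16}$ and use the diagonal cyclic shift $(a_i, b_j) \mapsto (a_{i+1}, b_{j+1})$. If the $8$ edges of a base sunlet have pairwise distinct differences $j - i$ in $\mathbb{Z}_{16}$, then its $16$ translates are mutually edge-disjoint and cover all $16$ edges of each of those $8$ differences. Two base sunlets whose combined $16$ differences give each element of $\mathbb{Z}_{16}$ exactly once therefore yield $32$ edge-disjoint copies of $L_8$ that cover all $256$ edges of $K_{16,16}$. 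Exhibiting such a pair of base blocks is a finite combinatorial check that can be done by hand; one can, for instance, take a sunlet whose cycle lives on $\{a_0, a_1\} \cup \{b_0, b_2\}$ with pendants chosen so that the $8$ differences are $\{0,1,2,3,4,5,14,15\}$, and a second whose differences are the complementary set $\{6,7,8,9,10,11,12,13\}$.

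Assembling everything, the sunlet decompositions of the $p$ copies of $K_{17}$ and the $\binom{p}{2}$ copies of $K_{16,16}$ fit together into the required $L_8$-decomposition of $K_{16p+1}$. The main technical content lies in the bipartite construction; the rest is structural accounting.
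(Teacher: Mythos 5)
The paper itself contains no proof of Theorem \ref{1.2}; it is imported verbatim from \cite{SM}, so there is no in-paper argument to compare yours against, and your proposal has to stand on its own. It does: the argument is correct. The skeleton is sound --- splitting $K_{16p+1}$ into $p$ copies of $K_{17}$ glued at a common vertex $\infty$ together with $\binom{p}{2}$ copies of $K_{16,16}$ between the blocks is a genuine edge-partition (the count $p\binom{17}{2}+256\binom{p}{2}=8p(16p+1)$ confirms it), and each $K_{17}$ is handled by Theorem \ref{1.1} with $k=4$. For the $K_{16,16}$ step you could have simply invoked Theorem \ref{1.4}, since $16\cdot 16\equiv 0\pmod 8$ and $(16,16)$ is not an exceptional pair; your difference construction is a reasonable alternative that keeps the argument self-contained and independent of that result. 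Two small repairs are needed. First, your parenthetical claim that every vertex of $K_{16,16}$ \emph{must} lie in exactly four sunlets as a cycle vertex and four as a pendant does not follow from the degree count alone: $3a+b=16$ also admits $(a,b)=(5,1)$, $(3,7)$, etc., so $4{+}4$ is only the average; fortunately nothing in your construction depends on this. Second, the base blocks should actually be exhibited, and one must observe the constraint that the four cycle differences, in cyclic order $(d_1,d_2,d_3,d_4)$, satisfy $d_1+d_3=d_2+d_4$ in $\mathbb{Z}_{16}$. Both of your proposed difference sets admit such a splitting ($\{0,1\}$ against $\{15,2\}$ with common sum $1$; $\{6,9\}$ against $\{7,8\}$ with common sum $15$), and concrete realizations exist: the cycle $a_0b_0a_1b_2$ with pendants $b_3$ at $a_0$, $b_5$ at $a_1$, $a_{11}$ at $b_0$, $a_4$ at $b_2$ gives differences $\{0,1,2,3,4,5,14,15\}$, while the cycle $a_0b_6a_{15}b_8$ with pendants $b_{10}$ at $a_0$, $b_{11}$ at $a_{15}$, $a_9$ at $b_6$, $a_{13}$ at $b_8$ gives $\{6,7,8,9,10,11,12,13\}$. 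With those blocks written down, the proof is complete.
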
 
\begin{lem}[\cite{SM}]\label{1.3} 
There exists a $L_{8}$- decomposition of $K_{16}$. 
\end{lem}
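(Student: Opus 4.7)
Since $|E(K_{16})| = 120 = 15 \cdot 8$ and $|E(L_{8})| = 8$, exactly $15$ edge-disjoint copies of $L_{8}$ are required. The count $15$ is the signature of the \emph{difference method} on $\mathbb{Z}_{15}$ with one fixed point, so my plan is to produce the decomposition as a single orbit under translation.

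Identify $V(K_{16})$ with $\mathbb{Z}_{15} \cup \{\infty\}$ and let $\mathbb{Z}_{15}$ act on $V(K_{16})$ by translation, fixing $\infty$. Under this action $E(K_{16})$ partitions into exactly $8$ orbits, each of size $15$: the seven pure-difference orbits $O_{d} = \{\{i,\, i+d\} : i \in \mathbb{Z}_{15}\}$ for $d \in \{1,2,3,4,5,6,7\}$, together with the infinity orbit $O_{\infty} = \{\{i,\, \infty\} : i \in \mathbb{Z}_{15}\}$. If I can exhibit one copy $B$ of $L_{8}$ inside $K_{16}$ whose eight edges meet each of these eight orbits exactly once, then the $15$ translates $\{B + j : j \in \mathbb{Z}_{15}\}$ automatically give the required $L_{8}$-decomposition, since developing one edge of $O_{d}$ (respectively $O_{\infty}$) through $\mathbb{Z}_{15}$ sweeps out the entire orbit.

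An explicit base block that works is
$$B \;=\; \begin{pmatrix} 0 & 1 & 3 & 7 \\ \infty & 4 & 8 & 13 \end{pmatrix},$$
whose cycle edges $\{0,1\}, \{1,3\}, \{3,7\}, \{7,0\}$ realise differences $1, 2, 4, 7$, and whose pendant edges $\{0,\infty\}, \{1,4\}, \{3,8\}, \{7,13\}$ cover the $\infty$-orbit together with differences $3, 5, 6$. Hence the eight edges of $B$ are distributed one to each orbit, which is precisely what is needed.

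The only non-routine step is finding $B$ itself: one must balance the rigid sunlet shape (a $4$-cycle in which every cycle vertex carries its own pendant) against the requirement that the eight associated differences, together with the single $\infty$-edge, be pairwise distinct modulo $15$. The search space is small enough that a short hand-search suffices, and once $B$ is written down the orbit argument makes verification immediate.
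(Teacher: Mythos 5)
Your proof is correct, and it is worth noting that the paper does not actually prove Lemma \ref{1.3} at all --- it only cites the companion paper \cite{SM} --- so your argument supplies a self-contained proof where the present text has none. The difference-method setup is sound: on $\mathbb{Z}_{15}\cup\{\infty\}$ the edge set of $K_{16}$ splits into the seven pure-difference orbits $O_1,\dots,O_7$ (each of size $15$, since $15$ is odd so no difference is self-paired) plus the $\infty$-orbit, and your base block $B=\begin{pmatrix}0 & 1 & 3 & 7\\ \infty & 4 & 8 & 13\end{pmatrix}$ checks out: the cycle edges $\{0,1\},\{1,3\},\{3,7\},\{7,0\}$ realise differences $1,2,4,7$, the pendant edges $\{1,4\},\{3,8\},\{7,13\}$ realise $3,5,6$, and $\{0,\infty\}$ hits the remaining orbit, with all eight vertices of $B$ distinct so that every translate is a genuine copy of $L_8=C_4\odot K_1$. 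Since no nonzero translation fixes an edge of any orbit, the $15$ translates of $B$ cover each orbit exactly once, giving the required $120/8=15$ edge-disjoint sunlets. Given that Theorem \ref{1.2} ($K_{16p+1}$) in the same source is also most naturally obtained by difference methods, your approach is likely in the same spirit as \cite{SM}, but it has the advantage of being a one-block cyclic construction that a reader can verify in a few lines rather than an explicit list of fifteen sunlets.
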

\begin{thm}[\cite{SM}]\label{1.4}
For any $m,n \geq 4$, $K_{m,n}$ has a $L_{8}$- decomposition if and only if $mn \equiv 0 \ (mod \ 8)$ except $(m,n) = (8,5), (4,4r+2)$ where $r > 0$.
\end{thm}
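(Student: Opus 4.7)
The plan is to prove necessity and sufficiency separately. Counting edges, $|E(L_{8})|=8$ forces $8 \mid mn$. To rule out the two exceptional families I would use a vertex-incidence count. Since $L_{8}$ is bipartite with classes of size $4$, every embedded copy uses exactly four vertices from each part of $K_{m,n}$. In $K_{4,4r+2}$ the small part therefore lies entirely inside every one of the $(2r+1)$ copies, and each of its vertices contributes $1$ (pendant) or $3$ (cycle) to its degree in every copy; a sum of $2r+1$ odd numbers is odd, contradicting $\deg v = 4r+2$. For $K_{8,5}$, a role-vector analysis on the small side forces each of its five vertices to lie in exactly four copies with $(n_{0},n_{1},n_{3})=(1,2,2)$, whereas on the large side each vertex must lie in either $5$ or $3$ copies; writing $x,y$ for the respective counts yields $x+y=8$ and $5x+3y=4\cdot 5=20$, a system with no non-negative integer solution.

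For sufficiency, I would assemble explicit base decompositions and extend by edge-splitting. The base list should contain $K_{4,4}$, $K_{4,8}$, $K_{4,12}$, $K_{8,8}$, and $K_{8,n}$ for each $n \in \{9,10,\ldots,15\}$, constructed by hand (rotational or difference-based constructions on vertex labels cover most). Given arbitrary $(m,n)$ with $m,n \geq 4$, $8 \mid mn$, and $(m,n)$ outside the exceptions, I would repeatedly apply $K_{m,n+n'} = K_{m,n} \oplus K_{m,n'}$ to split the larger side into admissible base pieces; by distributivity of decomposition over edge-disjoint unions, the individual $L_{8}$-decompositions of the pieces combine into one for $K_{m,n}$. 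An entirely parallel reduction applies to the other side when both sides are moderate. The splits must be arranged so that every intermediate piece is itself of the form $K_{a,b}$ with $a,b \geq 4$, $8 \mid ab$, and distinct from $(4,4r+2)$ and $(8,5)$.

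The main obstacle is twofold. First, the small base cases --- especially $K_{4,8}$, where each vertex of the small side must be a cycle vertex in exactly two of the four copies and a pendant in the other two --- are tight but tractable by direct construction. Second, choosing the split pattern so that no forbidden sub-instance ever appears requires care: when $m=4$ one must always split $n$ into blocks of size $\equiv 0 \pmod 4$ while never producing a residue $2$, which dictates using $K_{4,8}$ and $K_{4,12}$ as the building blocks; when one side has residue $5 \pmod 8$, one splits off a $K_{m,13}$ piece first so that what remains is $K_{m,8k}$ with $k \geq 2$, thus steering clear of $K_{8,5}$. Once the base constructions are in hand and the splitting template respects these two rules, the inductive step is routine.
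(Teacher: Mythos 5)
Your necessity argument is correct and, in fact, it is the sharpest part of the proposal: the bipartition of $L_{8}$ into two classes each containing two degree-$3$ and two degree-$1$ vertices, the parity count for $K_{4,4r+2}$, and the incidence count for $K_{8,5}$ are all sound. But notice that your $K_{8,5}$ computation proves strictly more than the statement allows: applied to $K_{8k,5}$ it gives $x+y=8k$ and $5x+3y=4\cdot 5k=20k$, hence $2x=-4k<0$ for \emph{every} $k\geq 1$. So $K_{16,5}$, $K_{24,5},\dots$ are all non-decomposable, yet the theorem's exception list contains only $(8,5)$. The statement as printed is therefore false, and no proof of it can be completed; your own counting is the disproof. (The present paper never proves Theorem~\ref{1.4} --- it is imported from \cite{SM} --- and indeed Lemma~\ref{2.7} of this paper invokes it to decompose the bipartite graph $K_{16,5}$, which your count rules out. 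Either the quoted exception list should read $(8r,5)$, or the surrounding lemmas need repair; in any case you should flag the discrepancy rather than attempt to prove the statement verbatim.)

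Independently of that, the sufficiency scheme has concrete holes in its base-case inventory. The graphs $K_{8,6}$, $K_{8,7}$ and $K_{12,6}$ all satisfy the stated conditions but cannot be reached by your splitting template: a side of size $6$ or $7$ cannot be written as a sum of parts $\geq 4$ in a nontrivial way, and splitting the other side of $K_{8,6}$ or $K_{12,6}$ into $4$'s and $8$'s always produces a forbidden $K_{4,6}=K_{4,4\cdot 1+2}$, while splitting the $8$ in $K_{8,7}$ gives $K_{4,7}$ with $28\not\equiv 0\pmod 8$. These three must therefore appear as explicitly constructed base cases, and none is on your list (whereas $K_{4,8}$, $K_{4,12}$, $K_{8,8}$, $K_{8,10}$, $K_{8,12}$, $K_{8,14}$ are redundant once $K_{4,4}$ and $K_{8,6}$ are in hand). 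Your rule for residue $5\pmod 8$ also tacitly assumes the side is at least $13$; for a side equal to $5$ there is nothing to split off, which is exactly where the impossibility above lives. Until the base list is corrected and each base graph is actually constructed, the ``routine inductive step'' has nothing to stand on.
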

%%%%%%%%%%%%%%%%%%%%%%%%%%%%%%%%%%%%%%%%%%%%%%%%%%%%%%%%%%%%%%%%%%%%%%%%%%%%%%%%%%%%%%%%%%%%%%%%%%%%%%%%%%%%%%%%%%%%%%%%%%%%%%%%%%%%%%%%%%%%%%%%%%%%%%%%%%%%%%%%
\section{Decomposition of $K_{m} \Box K_{n}$ into Sunlet Graph of order Eight}
\subsection{Necessary Conditions}
\begin{lem}\label{2.1}
If $K_m \Box K_n$has an $L_{8}$-decomposition, then either
\begin{enumerate}
  \item $m, n \equiv \ 0 \ (mod \ 4)$ 
	\item $m \equiv \ 0 \ (mod \ 8), \ n \equiv \ 0 \ (mod \ 2)$
	\item $m \equiv \ 0 \ (mod \ 16)$
	\item $m \equiv \ 1 \ (mod \ 16), \ n \equiv \ 1 \ (mod \ 16)$ 
	\item $m \equiv \ 15 \ (mod \ 16), \ n \equiv \ 3 \ (mod \ 16)$
	\item $m \equiv \ 13 \ (mod \ 16), \ n \equiv \ 5 \ (mod \ 16)$
	\item $m \equiv \ 11 \ (mod \ 16), \ n \equiv \ 7 \ (mod \ 16)$
	\item $m \equiv \ 9 \ (mod \ 16), \ n \equiv \ 9 \ (mod \ 16)$
\end{enumerate}
\end{lem}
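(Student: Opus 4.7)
The plan is to derive every condition in the lemma from the single divisibility requirement obtained by edge-counting. Since $|E(L_8)|=8$ and
\[
|E(K_m\Box K_n)| \;=\; m\binom{n}{2}+n\binom{m}{2} \;=\; \tfrac{1}{2}mn(m+n-2),
\]
the existence of an $L_8$-decomposition forces $16\mid mn(m+n-2)$. The whole task then reduces to translating this congruence into residue classes of the pair $(m,n)$ modulo $16$ via a parity case analysis.

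\emph{Step 1 (both odd).} If $m,n$ are both odd, then $mn$ is odd, so the divisibility collapses to $16\mid m+n-2$, i.e.\ $m+n\equiv 2\pmod{16}$. Enumerating the odd--odd residue pairs modulo $16$ whose sum is $2\pmod{16}$ produces exactly the pairs listed in items (4)--(8) (together with those obtained by swapping $m$ and $n$).

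\emph{Step 2 (exactly one of $m,n$ even).} Without loss of generality $m$ is odd and $n$ is even; then $m+n-2$ is odd, so $v_2\bigl(mn(m+n-2)\bigr)=v_2(n)$. Hence $16\mid n$, which after re-labelling is item (3).

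\emph{Step 3 (both even).} Writing $m=2^am'$ and $n=2^bn'$ with $m',n'$ odd and $a,b\ge 1$, I would compute $v_2(m+n-2)$ in the sub-cases $a=b=1$, $a=1<b$ (or vice versa), and $a,b\ge 2$. Comparing the resulting total $v_2\bigl(mn(m+n-2)\bigr)$ against the required lower bound of $4$ and sorting the surviving sub-cases should match them to items (1)--(3) up to swapping.

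The main obstacle is Step 3: one must track the delicate interaction between $v_2(m)$, $v_2(n)$, and $v_2(m+n-2)$ (for instance, when $a=1$ and $b=2$ the valuation of $m+n-2$ can be $2$ or larger depending on the residues of $m',n'$ modulo $4$), and then verify that every even--even pair whose total $2$-adic valuation reaches $4$ really is captured by one of the stated items. Once this bookkeeping is settled, Steps 1--3 together exhaust the eight cases and the lemma follows.
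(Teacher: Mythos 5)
Your Steps 1 and 2 are fine: for $m,n$ both odd the condition $16\mid mn(m+n-2)$ is equivalent to $m+n\equiv 2\pmod{16}$, which gives exactly items (4)--(8) up to swapping, and in the mixed-parity case it forces the even parameter to be divisible by $16$, which is item (3). The fatal problem is Step 3, which you leave as a promissory note; if you actually carry out the bookkeeping it does \emph{not} ``match items (1)--(3).'' Take $m\equiv 2\pmod 4$ and $n\equiv 4\pmod 8$. Then $v_2(m)=1$ and $v_2(n)=2$, and since $m-2\equiv 0\pmod 4$ and $n\equiv 0\pmod 4$ we also get $v_2(m+n-2)\ge 2$, so $v_2\bigl(mn(m+n-2)\bigr)\ge 5$ and the divisibility condition holds. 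Yet such a pair satisfies none of the eight items: (1) fails because $4\nmid m$, (2) fails because neither parameter is $\equiv 0\pmod 8$, (3) fails because neither is $\equiv 0\pmod{16}$, and (4)--(8) require both parameters odd. So the congruence $16\mid mn(m+n-2)$ is strictly weaker than the disjunction of the eight cases, and your opening reduction --- ``the whole task then reduces to translating this congruence'' --- cannot deliver the lemma as stated.

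You should know that the paper's own proof is the identical edge count followed by the bare assertion that the congruence holds ``in each of the above eight cases and only in these cases,'' so it contains exactly the gap you stalled on; you have not missed some auxiliary argument. Moreover the gap is not repairable by a cleverer counting argument, because the smallest escaping pair $(m,n)=(2,4)$ genuinely admits a decomposition: writing $K_2\Box K_4$ on vertices $a_1,\dots,a_4,b_1,\dots,b_4$ with the two $K_4$'s on the $a_i$ and the $b_i$ and matching edges $a_ib_i$, the two sunlets with cycles $a_1a_2b_2b_1$ and $a_3a_4b_4b_3$ and pendant edges $a_1a_3,a_2a_4,b_1b_3,b_2b_4$ and $a_3a_2,a_4a_1,b_4b_1,b_3b_2$ respectively partition all $16$ edges. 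Hence the necessity claim itself is false as written (the class $m\equiv 2\pmod 4$, $n\equiv 4\pmod 8$ and its transpose must at least be added to the list), so the correct conclusion of your analysis is a counterexample to the lemma, not a proof of it.
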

\begin{proof}
The graph $K_m \Box K_n$ has $mn$ vertices, each having degree $m+n-2$. Hence, $K_m \Box K_n$ has $\frac{mn(m+n-2)}{2}$ edges. Assume that $K_{m} \Box K_{n}$ admits an $L_{8}$- decomposition. Then the number of edges in the graph must be divisible by $8$. i.e., $16|mn(m+n-2)$. Hence these conditions are met in each of the above eight cases and only in these cases.
\end{proof}
%%%%%%%%%%%%%%%%%%%%%%%%%%%%%%%%%%%%%%%%%%%%%%%%%%%%%%%%%%%%%%%%
\subsection{Sufficient Conditions}
We now prove the above necessary conditions are also sufficiency.
\begin{lem}\label{2.2}
If $m \equiv 0\pmod4$ and $n \equiv 0\pmod4$, then the graph $K_m \Box K_n$ has an $L_{8}$-decomposition.
\end{lem}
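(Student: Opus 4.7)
The plan is to reduce the decomposition to a $4\times 4$ base block plus bipartite pieces that we already understand. Write $m=4a$ and $n=4b$, and partition $V(K_{4a})$ into $a$ classes $P_1,\dots,P_a$ of size four and $V(K_{4b})$ into $b$ classes $Q_1,\dots,Q_b$ of size four. This produces an $a\times b$ grid of blocks $P_i\times Q_j$. The subgraph of $K_m\Box K_n$ induced on any single block is a copy of $K_4\Box K_4$, because each coordinate restricts to a $K_4$. Between-block edges follow a simple dichotomy: two blocks differing in both coordinates share no edge, while two blocks that agree in one block-coordinate are joined by four vertex-disjoint copies of $K_{4,4}$, one for each common base vertex.

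A direct edge count, comparing $ab\cdot 48+32ab(a-1)+32ab(b-1)$ with $|E(K_{4a}\Box K_{4b})|=16ab(2a+2b-1)$, then certifies that $K_{4a}\Box K_{4b}$ decomposes, edge-disjointly, into $ab$ copies of $K_4\Box K_4$ together with $4b\binom{a}{2}+4a\binom{b}{2}$ copies of $K_{4,4}$. Every such $K_{4,4}$ admits an $L_{8}$-decomposition by Theorem~\ref{1.4}, since $(m,n)=(4,4)$ satisfies $mn\equiv 0\pmod 8$ and is not in the stated exception list. It therefore suffices to exhibit an $L_{8}$-decomposition of $K_4\Box K_4$ into six copies.

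For this base case I would argue by explicit construction. The graph $K_4\Box K_4$ carries $24$ row-edges and $24$ column-edges, so on average each of the six required $L_{8}$'s must contribute four of each type. I would take the cycle of every $L_{8}$ to be a rook $4$-cycle of the form $(v_{i,j},v_{i,k},v_{l,k},v_{l,j})$ (two row-edges and two column-edges), and attach the four pendants into the two unused rows and two unused columns, two each. Cycling through six carefully chosen pairs of row-pairs and column-pairs should exhaust all $48$ edges exactly once.

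The main obstacle will be precisely this base case: writing down and verifying six concrete edge-disjoint $L_{8}$'s covering $K_4\Box K_4$. The surrounding block reduction is routine combinatorics, and the appeal to Theorem~\ref{1.4} for each $K_{4,4}$ is immediate.
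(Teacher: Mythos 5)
Your reduction is exactly the one the paper uses: it writes $K_{4s}\Box K_{4t}$ as $st\,(K_4\Box K_4)\ \oplus\ 2st(s+t-2)K_{4,4}$, which agrees with your count $4b\binom{a}{2}+4a\binom{b}{2}=2ab(a+b-2)$ after renaming $(a,b)=(s,t)$, and it likewise disposes of each $K_{4,4}$ by Theorem~\ref{1.4}. The one genuine gap is the step you yourself flag: the paper does not leave the base case to a heuristic but exhibits an explicit $L_8$-decomposition of $K_4\Box K_4$ (its Figure~1), whereas "cycling through six carefully chosen pairs should exhaust all $48$ edges" is a plan, not a proof. The edge-type balance you compute (four row-edges and four column-edges per copy) is necessary but far from sufficient; in particular your sketch ignores the requirement that the four pendants of each $L_8$ land on four \emph{distinct} vertices off the $4$-cycle, and a naive pairing of row-pairs with column-pairs (e.g.\ the identity pairing) actually fails, since it leaves some cycle vertex with no legal pendant at all.

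Your template can, however, be completed, which closes the gap: pair the row set $\{a,b\}$ with the complementary column set, so every pendant is forced into a diagonal vertex $x_i^i$, and take the six copies
$\begin{pmatrix}x_{1}^{3} & x_{1}^{4} & x_{2}^{4} & x_{2}^{3} \\ x_{1}^{1} & x_{4}^{4} & x_{2}^{2} & x_{3}^{3}\end{pmatrix}$,
$\begin{pmatrix}x_{1}^{2} & x_{1}^{4} & x_{3}^{4} & x_{3}^{2} \\ x_{2}^{2} & x_{1}^{1} & x_{4}^{4} & x_{3}^{3}\end{pmatrix}$,
$\begin{pmatrix}x_{1}^{2} & x_{1}^{3} & x_{4}^{3} & x_{4}^{2} \\ x_{1}^{1} & x_{3}^{3} & x_{4}^{4} & x_{2}^{2}\end{pmatrix}$,
$\begin{pmatrix}x_{2}^{1} & x_{2}^{4} & x_{3}^{4} & x_{3}^{1} \\ x_{2}^{2} & x_{4}^{4} & x_{3}^{3} & x_{1}^{1}\end{pmatrix}$,
$\begin{pmatrix}x_{2}^{1} & x_{2}^{3} & x_{4}^{3} & x_{4}^{1} \\ x_{1}^{1} & x_{2}^{2} & x_{3}^{3} & x_{4}^{4}\end{pmatrix}$,
$\begin{pmatrix}x_{3}^{1} & x_{3}^{2} & x_{4}^{2} & x_{4}^{1} \\ x_{3}^{3} & x_{2}^{2} & x_{4}^{4} & x_{1}^{1}\end{pmatrix}$.
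One checks that in each of the eight $K_4$'s (four rows, four columns) the cycle edges are precisely the three edges avoiding the diagonal vertex and the pendant edges are precisely the three edges meeting it, so all $48$ edges are covered exactly once and each copy has eight distinct vertices. With this (or the paper's Figure~1) supplied, your proof coincides with the paper's.
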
 
\begin{proof}
Let $m = 4s$ and $n = 4t$ for some $s, t > 0$. We can divide the graph $K_m \Box K_n$ into $st\left(K_4 \Box K_4\right)$ and the remaining graph can be viewed as $2st(s+t-2)K_{4,4}$. The $L_8$-decomposition of $K_4 \Box K_4$ is shown in Fig 1. By Theorem $\ref{1.4}$, $K_{4,4}$ has an $L_8$-decomposititon. Hence the graph $K_m \Box K_n$ has the desired decomposition.
\end{proof}
%%%%%%%%%%%%%%%%%%%%%%%%%%%%%%%%%%%%%%%%%%%%%%%%%%%%%%%%%%%%%%%%
\begin{figure}[h]
\centering
\includegraphics[width=.80\textwidth]{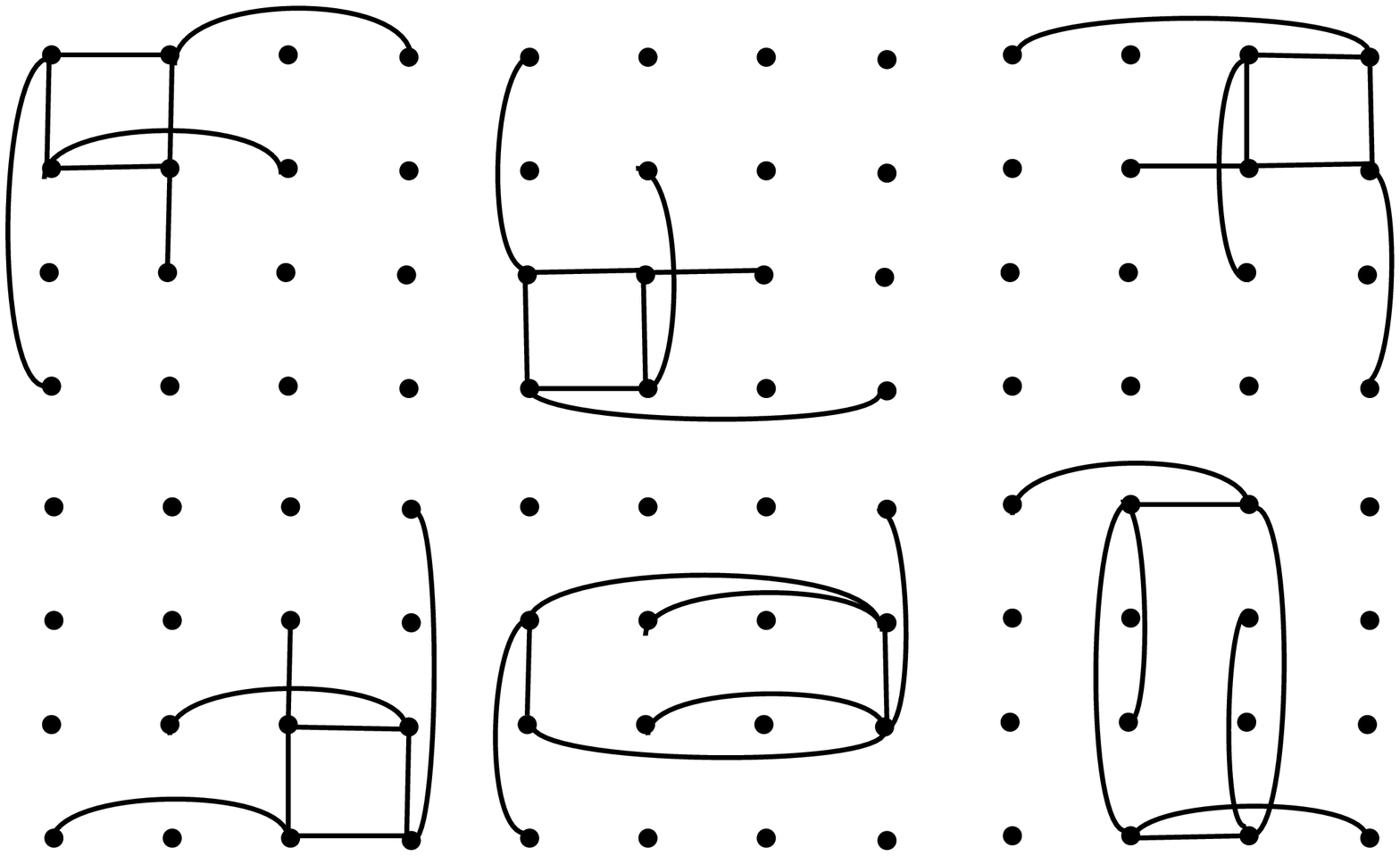}
\label{fig 1}
\caption{$L_{8}$-decomposition of $K_4 \Box K_{4}$.}
\end{figure}
%%%%%%%%%%%%%%%%%%%%%%%%%%%%%%%%%%%%%%%%%%%%%%%%%%%%%%%
\begin{lem}\label{2.3}
If $m \equiv 0\pmod8$ and $n \equiv 0\pmod2$, then the graph $K_m \Box K_n$ has an $L_{8}$-decomposition.
\end{lem}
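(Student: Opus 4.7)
The plan is to reduce the problem via block partitions to building blocks whose $L_{8}$-decompositions are either already available or can be constructed directly. First split into sub-cases by $n \bmod 4$. If $n \equiv 0 \pmod 4$, then the hypothesis $m \equiv 0 \pmod 8$ also gives $m \equiv 0 \pmod 4$, so Lemma~\ref{2.2} directly yields the desired $L_{8}$-decomposition. Assume $n \equiv 2 \pmod 4$ for the rest.

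Write $m = 8s$ and partition $V(K_m) = V_1 \cup \dots \cup V_s$ into $s$ blocks of size $8$, so that $K_m = \bigoplus_{i=1}^{s} K_8[V_i] \;\oplus\; \bigoplus_{i<j} K_{V_i, V_j}$. Taking the Cartesian product with $K_n$ and grouping the within-block horizontal $K_8$-copies together with the vertical $K_n$-copies at the columns of the same block yields the edge-disjoint decomposition
\[
K_m \Box K_n \;=\; s\,(K_8 \Box K_n) \;\oplus\; n\binom{s}{2}\, K_{8,8}.
\]
By Theorem~\ref{1.4}, $K_{8,8}$ has an $L_{8}$-decomposition, so it remains to prove that $K_8 \Box K_n$ has an $L_{8}$-decomposition for $n \equiv 2 \pmod 4$.

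For the base case $n = 2$, the graph $K_{8} \Box K_{2}$ consists of two vertex-disjoint $K_8$'s joined by a perfect matching of $8$ edges, giving $64$ edges, which must be packed into $8$ copies of $L_{8}$. I would exhibit an explicit packing in which the first few $L_{8}$'s have their $4$-cycle inside one of the $K_8$'s and use the vertical matching edges as their four pendants, while the rest partition the leftover edges inside each $K_8$ into $4$-cycle-plus-pendant configurations. For $n = 4t+2 \geq 6$, partition $V(K_n)$ into one block of size $2$ and $t$ blocks of size $4$ to obtain
\[
K_8 \Box K_n \;=\; (K_8 \Box K_2) \;\oplus\; t\,(K_8 \Box K_4) \;\oplus\; 8\binom{t}{2}\, K_{4,4} \;\oplus\; 8t\, K_{2,4},
\]
whose first three summands decompose by the base case, Lemma~\ref{2.2}, and Theorem~\ref{1.4} respectively.

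The hard part is the final $8t$ copies of $K_{2,4}$, since an isolated $K_{2,4}$ admits no $L_{8}$-decomposition: only two of its six vertices have degree at least three, whereas any $L_{8}$ requires four degree-three cycle vertices. The idea is not to decompose these $K_{2,4}$'s in isolation. Each $K_{2,4}$ is the edge-disjoint union of two $4$-cycles, and each such $4$-cycle can serve as the cycle of an $L_{8}$ whose four pendant edges are borrowed from the horizontal $K_8$-copies in the adjacent $K_8 \Box K_2$ or $K_8 \Box K_4$ cell. The principal technical step is to design the $L_{8}$-decompositions of these cells so that the required pendant edges are simultaneously available for every $K_{2,4}$, uniformly in $t$.
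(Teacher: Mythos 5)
Your first reduction (rows into blocks of $8$, leaving $s(K_8\Box K_n)\oplus n\binom{s}{2}K_{8,8}$) is fine, but the rest of the plan has a genuine gap, and in its main case it cannot work as stated. After splitting $V(K_n)$ into one $2$-block and $t$ blocks of size $4$, you must dispose of $8t$ copies of $K_{2,4}$, and you propose to use each of the two $4$-cycles of every $K_{2,4}$ as the cycle of an $L_8$ whose pendant edges are borrowed from the adjacent cells. Count what this demands at the $2$-block. Every $4$-cycle of a $K_{2,4}$ in row $a$ passes through \emph{both} vertices of the $2$-block in row $a$ (a $4$-cycle in $K_{2,4}$ must use both vertices of the small side), so each such vertex must supply $2t$ pendant edges, and these can only be vertical edges of the $K_8\Box K_2$ cell (the lone horizontal edge inside the $2$-block joins two cycle vertices, and all other horizontal edges at these vertices are themselves $K_{2,4}$ edges that you have already committed as cycle edges). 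Globally this requires $32t$ distinct vertical edges from the $K_8\Box K_2$ cell, which contains only $2\binom{8}{2}=56$ of them; so the scheme is impossible once $t\ge 2$, i.e.\ for every $n\equiv 2\pmod 4$ with $n\ge 10$. Even for the surviving cases the proof is only promissory: you never exhibit the claimed $L_8$-decomposition of $K_8\Box K_2$ (which is not in the paper either), nor the joint decomposition of the cells-minus-borrowed-edges, and that joint construction is exactly the hard content of the lemma.

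The paper avoids $K_{2,4}$ leftovers altogether by choosing block sizes whose pairwise joins are decomposable bipartite graphs: for $n\equiv 2\pmod 8$ it groups the columns into $8$-blocks plus one $10$-block, and for $n\equiv 6\pmod 8$ into $8$-blocks plus one $6$-block, so the pieces are $K_8\Box K_8$, $K_8\Box K_{10}$ (resp.\ $K_8\Box K_6$), $K_{8,8}$, $K_{8,10}$ (resp.\ $K_{8,6}$); the bipartite pieces fall under Theorem~\ref{1.4}, and explicit $L_8$-decompositions of $K_8\Box K_{10}$ and $K_8\Box K_6$ are supplied in the Appendix. If you want to salvage your route, you would either have to imitate this (choose column blocks of sizes $8$ and $10$ or $6$ so that no $K_{2,4}$ or $K_{2,8}$-type join appears) or genuinely construct the base products $K_8\Box K_2$, $K_8\Box K_6$ and a borrowing pattern that does not route all pendants through the two small columns.
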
 
\begin{proof}
Let $m \equiv 0\pmod8$ and $n \equiv 0 \ (or) \ 4\pmod8$, then the proof follows from Lemma \ref{2.2}. Now let $m = 0\pmod8$ and consider two cases for $n$. \\
\textit{\textbf{Case (1)}} $n \equiv \ 2 \pmod8$. \\
Let $m = 8s$ and $n = 8t+2$ for some $s, t > 0$. Then we can write $K_{m} \Box K_{n}$ as $s(t-1)\left(K_{8} \Box K_{8}\right)$ $\oplus$ $s\left(K_{8} \Box K_{10}\right)$ $\oplus$ $\frac{s\left[(t-1)(t-2) + (8t+2)(s-1)\right]}{2}K_{8,8}$ $\oplus$ $8s(t-1)K_{8,10}$. An $L_{8}$-decomposition of $K_8 \Box K_{10}$ is presented in Appendix $4.1.1$ and the $L_8$-decomposition of the graphs $K_{8} \Box K_{8}$, $K_{8,8}$ and $K_{8,10}$ follows from Lemma \ref{2.2} and Theorem \ref{1.4}. \\
\textit{\textbf{Case (2)}} $n \equiv \ 6 \pmod8$. \\
Let $m = 8s$ and $n = 8t+6$ for some $s, t > 0$. Then we can write $K_{m} \Box K_{n}$ as $st\left(K_{8} \Box K_{8}\right)$ $\oplus$ $s\left(K_{8} \Box K_{6}\right)$ $\oplus$ $s\left[(s-1)(4t+3) + 4t(t-1)\right]K_{8,8}$ $\oplus$ $8stK_{8,6}$. An $L_{8}$-decomposition of $K_8 \Box K_{6}$ is appears in Appendix $4.1.2$ and the $L_8$-decomposition of the remaining graphs follows from Lemma \ref{2.2} and Theorem \ref{1.4}. Hence the graph $K_m \Box K_n$ has the desired decomposition.
\end{proof}
%%%%%%%%%%%%%%%%%%%%%%%%%%%%%%%%%%%%%%%%%%%%%%%%%%%%%%%%%%%%%%%%%%
\begin{lem}\label{2.4}
If $m \equiv 0\pmod{16}$, then the graph $K_m \Box K_n$ has an $L_{8}$-decomposition.
\end{lem}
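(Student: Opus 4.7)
The plan is to split on the parity of $n$, since the hypothesis $m\equiv 0\pmod{16}$ places no restriction on $n$. Write $m=16s$. When $n$ is even, Lemma~\ref{2.3} applies directly because $m\equiv 0\pmod 8$ and $n\equiv 0\pmod 2$.

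For $n$ odd, I would first reduce to the case $m=16$ by partitioning $[m]$ into $s$ blocks of size $16$. In $K_m\Box K_n$, two vertices lying in different row blocks are adjacent exactly when they share the same second coordinate, so exactly as in Lemma~\ref{2.2} one obtains the edge-disjoint decomposition
\[K_m\Box K_n \;=\; s\,(K_{16}\Box K_n)\ \oplus\ n\binom{s}{2}\,K_{16,16}.\]
The $K_{16,16}$ summands are $L_8$-decomposable by Theorem~\ref{1.4}, so the task reduces to producing an $L_8$-decomposition of $K_{16}\Box K_n$ for every odd $n$.

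For $K_{16}\Box K_n$ with $n$ odd and sufficiently large, I would partition $[n]$ according to $r:=n\bmod 8$: write $n=8t+r$ with $r\in\{1,3,5,7\}$ and partition $[n]$ into blocks of size $8$ together with one ``remainder'' block of odd size $|W_0|\in\{7,9,11,13\}$ (taking $|W_0|=r+8$ for $r\in\{1,3,5\}$ and $|W_0|=7$ for $r=7$). The choice $|W_0|\ge 7$ is forced, because it is precisely what ensures that every cross bipartite graph $K_{8,|W_0|}$ falls under Theorem~\ref{1.4}, avoiding both the lower bound $m,n\ge 4$ and the exception $(8,5)$. The resulting building blocks are then $K_{16}\Box K_8$ (handled by Lemma~\ref{2.3}), $K_{8,8}$ and $K_{8,|W_0|}$ (handled by Theorem~\ref{1.4}), together with the single ``base'' graph $K_{16}\Box K_{|W_0|}$.

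What remains are the base cases $K_{16}\Box K_n$ for $n\in\{1,3,5,7,9,11,13\}$. The case $n=1$ gives $K_{16}$, covered by Lemma~\ref{1.3}; the other six I would settle by explicit $L_8$-decompositions provided in an appendix, in the spirit of Appendices~4.1.1 and~4.1.2. These explicit constructions are the main obstacle: the small cases $n=3$ and $n=5$ in particular admit no useful further block partition, since $K_{3,8}$ cannot contain $L_8$ (its side of size $3$ is too small for the four vertices $L_8$ places in each bipartition class) and $K_{5,8}$ is precisely the exception excluded by Theorem~\ref{1.4}, so an ad hoc arrangement of $L_8$ copies on the $16n$ vertices of $[16]\times[n]$ is unavoidable.
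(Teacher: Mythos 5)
Your proposal is correct and follows essentially the same route as the paper: peel off blocks of eight columns plus one remainder block of size $7$, $9$, $11$ or $13$, absorb the cross edges into complete bipartite graphs handled by Theorem~\ref{1.4}, and settle the base graphs $K_{16}\Box K_r$ by explicit constructions (the paper's Appendices~4.2.1--4.2.4). Your extra attention to the genuinely small cases $n\in\{3,5\}$ is warranted --- the paper's own proof quietly assumes $t>0$ and only supplies those decompositions later, in Appendices~4.3.2 and~4.4.1 --- but this is a refinement of the same method, not a different one.
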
 
\begin{proof}
Let $m \equiv 0\pmod{16}$  and $n \equiv  0, 2, 4, 6\pmod8$, then the proof follows from Lemma \ref{2.3}. Let $m = 0\pmod{16}$ and consider four cases for odd $n$. \\
\textit{\textbf{Case (1)}} $n \equiv \ 1 \pmod8$. \\
Let $m = 16s$ and $n = 8t+1$ for some $s, t > 0$. Then we can write $K_{m} \Box K_{n}$ as $s(t-1)\left(K_{16} \Box K_{8}\right)$ $\oplus$ $s\left(K_{16} \Box K_{9}\right)$ $\oplus$ $8s(t-1)(t-2)K_{8,8}$ $\oplus$  $\frac{s(8t+1)(s-1)}{2}K_{16,16}$ $\oplus$ $16s(t-1)K_{8,9}$. An $L_{8}$-decomposition of $K_{16} \Box K_{9}$ is shown in Appendix $4.2.1$ and the $L_8$-decomposition of the remaining graphs follows from Lemma \ref{2.2} and Theorem \ref{1.4}. \\
\textit{\textbf{Case (2)}} $n \equiv \ 3 \pmod8$. \\
Let $m = 16s$ and $n = 8t+3$ for some $s, t > 0$. Then we can write $K_{m} \Box K_{n}$ as $s(t-1)\left(K_{16} \Box K_{8}\right)$ $\oplus$ $s\left(K_{16} \Box K_{11}\right)$ $\oplus$ $8s(t-1)(t-2)K_{8,8}$ $\oplus$  $\frac{s(8t+3)(s-1)}{2}K_{16,16}$ $\oplus$ $16s(t-1)K_{8,11}$. An $L_{8}$-decomposition of $K_{16} \Box K_{11}$ is shown in Appendix $4.2.2$ and the $L_8$-decomposition of the remaining graphs follows from Lemma \ref{2.2} and Theorem \ref{1.4}. \\
\textit{\textbf{Case (3)}} $n \equiv \ 5 \pmod8$. \\
Let $m = 16s$ and $n = 8t+5$ for some $s, t > 0$. Then we can write $K_{m} \Box K_{n}$ as $s(t-1)\left(K_{16} \Box K_{8}\right)$ $\oplus$ $s\left(K_{16} \Box K_{13}\right)$ $\oplus$ $8s(t-1)(t-2)K_{8,8}$ $\oplus$  $\frac{s(8t+5)(s-1)}{2}K_{16,16}$ $\oplus$ $16s(t-1)K_{8,13}$. An $L_{8}$-decomposition of $K_{16} \Box K_{13}$ is presented in Appendix $4.2.3$ and the $L_8$-decomposition of the remaining graphs follows from Lemma \ref{2.2} and Theorem \ref{1.4}. \\
\textit{\textbf{Case (4)}} $n \equiv \ 7 \pmod8$. \\
Let $m = 16s$ and $n = 8t+7$ for some $s, t > 0$. Then we can write $K_{m} \Box K_{n}$ as $st\left(K_{16} \Box K_{8}\right)$ $\oplus$ $s\left(K_{16} \Box K_{7}\right)$ $\oplus$ $8st(t-1)K_{8,8}$ $\oplus$  $\frac{s(8t+7)(s-1)}{2}K_{16,16}$ $\oplus$ $16stK_{8,7}$. An $L_{8}$-decomposition of $K_{16} \Box K_{7}$ is presented in Appendix $4.2.4$ and the $L_8$-decomposition of the remaining graphs follows from Lemma \ref{2.2} and Theorem \ref{1.4}. \\
Hence the graph $K_m \Box K_n$ has the desired decomposition.
\end{proof}
%%%%%%%%%%%%%%%%%%%%%%%%%%%%%%%%%%%%%%%%%%%%%%%%%%%%%%%%%%%%%%%%%%
\begin{lem}\label{2.5}
If $m \equiv 1\pmod{16}$ and $n \equiv 1\pmod16$, then the graph $K_m \Box K_n$ has an $L_{8}$-decomposition.
\end{lem}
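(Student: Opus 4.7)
The plan is to exploit a purely combinatorial factorisation of $K_m\Box K_n$ that is already compatible with Theorem \ref{1.2}. Recall from the definition of the Cartesian product that two vertices $(x_i,y_j)$ and $(x_{i'},y_{j'})$ are adjacent in $K_m\Box K_n$ if and only if either $i=i'$, in which case the edge lies inside the column fibre $\{x_i\}\times V(K_n)$, or $j=j'$, in which case it lies inside the row fibre $V(K_m)\times\{y_j\}$. The column fibre induces a copy of $K_n$ and the row fibre induces a copy of $K_m$, and the two families of fibres are mutually edge-disjoint and exhaust every edge of $K_m\Box K_n$. Hence
\[
K_m \Box K_n \;=\; \Bigl(\bigoplus_{j=1}^{n} K_m^{(j)}\Bigr) \;\oplus\; \Bigl(\bigoplus_{i=1}^{m} K_n^{(i)}\Bigr),
\]
where $K_m^{(j)}$ and $K_n^{(i)}$ denote, respectively, the row fibre over $y_j$ and the column fibre over $x_i$.

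Write $m=16a+1$ and $n=16b+1$ with $a,b\geq 0$. For each $j\in\{1,\dots,n\}$, Theorem \ref{1.2} supplies an $L_8$-decomposition of $K_m^{(j)}\cong K_{16a+1}$ (the edgeless case $a=0$ contributes no edges and needs no decomposition), and symmetrically for every $i\in\{1,\dots,m\}$ we obtain an $L_8$-decomposition of $K_n^{(i)}\cong K_{16b+1}$. Concatenating the resulting $n+m$ lists of $L_8$-blocks yields the desired $L_8$-decomposition of $K_m\Box K_n$.

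The argument is essentially a one-line reduction to Theorem \ref{1.2}, so there is no substantive obstacle: the congruences $m,n\equiv 1\pmod{16}$ are precisely what let every fibre be handled by the cited theorem, and unlike Lemmas \ref{2.2}--\ref{2.4} no ad hoc bipartite graph (such as $K_{8,8}$ or $K_{16,16}$) and no small explicit construction in an appendix is required. The only point requiring care, and which is immediate, is the edge-disjointness of the row and column fibres, which follows directly from the definition of $\Box$.
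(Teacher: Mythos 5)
Your proposal is correct and is essentially identical to the paper's own proof: the paper also writes $K_m \Box K_n$ as $(16t+1)K_{16s+1} \oplus (16s+1)K_{16t+1}$ (the row and column fibres) and invokes Theorem \ref{1.2} on each factor. No further comment is needed.
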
 
\begin{proof}
Let $m = 16s+1$ and $n = 16t+1$ for some $s, t > 0$. Then we can write $K_{m} \Box K_{n}$ as $(16t+1)K_{16s+1}$ $\oplus$ $(16s+1)K_{16t+1}$. By Theorem $\ref{1.2}$, the graph $K_{m} \Box K_{n}$ has the desired decomposition. 
\end{proof}
%%%%%%%%%%%%%%%%%%%%%%%%%%%%%%%%%%%%%%%%%%%%%%%%%%%%%%%%%%%%%%%%%%
\begin{lem}\label{2.6}
If $m \equiv 15\pmod{16}$ and $n \equiv 3\pmod{16}$, then the graph $K_m \Box K_n$ has an $L_{8}$-decomposition.
\end{lem}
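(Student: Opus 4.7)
The plan is to write $m=16s+15$ and $n=16t+3$ with $s,t\geq 0$, and to decompose $K_m\Box K_n$ into pieces whose $L_8$-decomposability is already established by Lemmas \ref{2.2}, \ref{2.4} and Theorem \ref{1.4}, plus a short list of genuinely new base products that I would construct explicitly in an appendix. The chunk sizes $15$, $16$, and $19$ below are chosen so that every bipartite cross-term that arises is covered by Theorem \ref{1.4}, while only one sub-product falls outside the reach of the previous lemmas.

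For the generic case $t\geq 1$, I would partition $V(K_m)=A_0\cup A_1\cup\cdots\cup A_s$ with $|A_0|=15$ and $|A_i|=16$ for $i\geq 1$, and $V(K_n)=B_0\cup B_1\cup\cdots\cup B_{t-1}$ with $|B_0|=19$ and $|B_j|=16$ for $j\geq 1$. Sorting the edges of $K_m\Box K_n$ according to this grid (sub-products on each cell $A_i\times B_j$, and bipartite strips between cells sharing a row or a column) yields
\begin{align*}
K_m\Box K_n &= (K_{15}\Box K_{19})\oplus (t-1)(K_{15}\Box K_{16})\oplus s(K_{16}\Box K_{19})\oplus s(t-1)(K_{16}\Box K_{16})\\
&\quad \oplus\, sn\,K_{15,16}\oplus m(t-1)\,K_{19,16}\oplus\Bigl[\binom{s}{2}n+m\binom{t-1}{2}\Bigr]K_{16,16}.
\end{align*}
Each right-hand term except $K_{15}\Box K_{19}$ is already known to admit an $L_8$-decomposition: $K_{16}\Box K_{19}$, $K_{15}\Box K_{16}$ (viewed as $K_{16}\Box K_{15}$) and $K_{16}\Box K_{16}$ come from Lemma \ref{2.4} (or Lemma \ref{2.2}), while the bipartite pieces $K_{15,16}$, $K_{19,16}$, $K_{16,16}$ come from Theorem \ref{1.4}. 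Note that no $K_{15,19}$ appears, because cells differing in both coordinates are not joined in a Cartesian product. Thus the generic case reduces to the single new product $K_{15}\Box K_{19}$.

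The boundary case $t=0$ (so $n=3$) has no $19$-chunk available and needs a separate split. Partitioning only $V(K_m)=A_0\cup A_1\cup\cdots\cup A_s$ with the same sizes as before gives
\[
K_m\Box K_3=(K_{15}\Box K_3)\oplus s(K_{16}\Box K_3)\oplus 3s\,K_{15,16}\oplus 3\binom{s}{2}K_{16,16}.
\]
The bipartite pieces are again covered by Theorem \ref{1.4}. Since $K_a\Box K_3$ has $3a(a+1)/2$ edges, which is divisible by $8$ exactly when $a\equiv 0$ or $15\pmod{16}$, the two products $K_{15}\Box K_3$ and $K_{16}\Box K_3$ are the minimal base cases that one is forced to construct directly for this boundary.

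The main obstacle is therefore not the bookkeeping above but the explicit appendix constructions of the three base products $K_{15}\Box K_{19}$, $K_{15}\Box K_3$ and $K_{16}\Box K_3$. Each $L_8$-block is rigid (a $C_4$ with four attached pendant edges) and must consume every edge of the product exactly once, so these constructions need careful organisation, presumably by exploiting a cyclic action on one of the factor's vertex labels and by deliberately mixing row-$C_4$'s, column-$C_4$'s, and the ``square'' $C_4$'s intrinsic to the Cartesian product. Once these three tables are in place, assembling the various cases of $(s,t)$ from the two displayed identities above is routine.
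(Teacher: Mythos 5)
Your grid identity is correct (I checked the edge counts and the applicability of Lemma \ref{2.4} to $K_{16}\Box K_{19}$ and $K_{16}\Box K_{15}$, and of Theorem \ref{1.4} to $K_{15,16}$, $K_{19,16}$, $K_{16,16}$), and your treatment of the boundary cases $t=0$ and $s=0$ is actually more explicit than the paper's. But the proposal has a genuine gap: everything hinges on the unconstructed base case $K_{15}\Box K_{19}$, and this is not a routine deferral. That graph has $\tfrac{15\cdot 19\cdot 32}{2}=4560$ edges, i.e.\ $570$ copies of $L_8$, and it cannot be broken down any further with the tools available. Indeed, the divisibility condition $16\mid ab(a+b-2)$ forces any split $K_{19}=K_{b_1}\oplus K_{b_2}\oplus K_{b_1,b_2}$ used against the $K_{15}$ factor to have $b_1,b_2\equiv 0$ or $3\pmod{16}$, so the only candidate is $19=16+3$; that split produces $15$ vertex-disjoint copies of $K_{16,3}$, and $K_{16,3}$ contains no copy of $L_8$ at all (the two cycle vertices and the two pendant vertices that an $L_8$ forces onto one side of a bipartition need four vertices there, but one side has only three). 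The analogous arithmetic kills every split of the $K_{15}$ factor as well. So under your carve-up the cell $K_{15}\Box K_{19}$ genuinely must be built by hand, block by block, and that table is the bulk of the proof, not an afterthought.

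The paper organizes the cut differently precisely to avoid this cell. It peels $K_{19}\setminus K_3$ (only $168$ edges, $21$ blocks, given in Appendix $4.3.1$) out of each row-copy of $K_{16t+3}$ lying over the $16$-blocks of $K_m$, and then recombines the leftover triangles with those $16$-blocks to form copies of $K_{16}\Box K_3$; the bottom-right corner is just $K_{15}\Box K_3$. Its only new explicit constructions are therefore $K_{19}\setminus K_3$, $K_{16}\Box K_3$ and $K_{15}\Box K_3$ (Appendices $4.3.1$--$4.3.3$), the last two of which coincide with your remaining deferred cases. In short: your reduction is sound and would work if you supplied the $570$-block table for $K_{15}\Box K_{19}$, but the paper's rearrangement shows that table is avoidable, and without it your argument is incomplete at its hardest point.
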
 
\begin{proof}
Let $m = 16s+15$ and $n = 16t+3$ for some $s, t > 0$. We can write $K_{m} \Box K_{n}$ as $(16t+3)K_{16s+15}$ $\oplus$ $(16s+15)K_{16t+1}$. Now the first $16t$ columns can be viewed as $K_{16s} \ \oplus \ K_{15} \oplus sK_{16,15}$ and the first $16s$ rows can be viewed as $K_{16(t-1)} \ \oplus K_{19} \ \oplus (t-1)K_{16,19}$. Then $K_{16s}(= sK_{16} \ \oplus \ \frac{s(s-1)}{2}K_{16,16})$, $K_{16(t-1)}(= (t-1)K_{16} \oplus \frac{(t-1)(t-2)}{2}K_{16,16})$, $K_{16,15}$ and $K_{16,19}$ has an $L_{8}$-decomposition by Lemma \ref{1.3} and Theorem \ref{1.4}. The graph $K_{19}$ can be viewed as $K_{19} \backslash K_{3} \ \oplus \ K_{3}$. The $L_8$-decomposition of $K_{19} \backslash K_{3}$ follows from Appendix $4.3.1$. Then $16s\left(K_{19} \backslash K_{3}\right)$ has an $L_{8}$-decomposition. Then the remaining graph can be viewed as $s(K_{16} \Box K_{3})$, $t(K_{15} \Box K_{16})$ and $K_{15} \Box K_{3}$. Hence the desired decomposition follows from Appendixes ${4.3.2}$, ${4.3.3}$ and Lemma \ref{2.4} 
\end{proof}
%%%%%%%%%%%%%%%%%%%%%%%%%%%%%%%%%%%%%%%%%%%%%%%%%%%%%%%%%%%%%%%%%%
\begin{lem}\label{2.7}
If $m \equiv 13\pmod{16}$ and $n \equiv 5\pmod{16}$, then the graph $K_m \Box K_n$ has an $L_{8}$-decomposition.
\end{lem}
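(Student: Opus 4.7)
The plan is to adapt the argument of Lemma~\ref{2.6} with the parameter substitutions $15\to 13$, $3\to 5$, and $19\to 21$. Write $m=16s+13$ and $n=16t+5$ for $s,t>0$, so that $K_m\Box K_n=(16t+5)K_{16s+13}\oplus(16s+13)K_{16t+5}$. View each copy of $K_{16s+13}$ as $K_{16s}\oplus K_{13}\oplus sK_{16,13}$ by partitioning $V(K_m)$ into $s$ sub-blocks of size $16$ together with a single block of size $13$, and view each copy of $K_{16t+5}$ as $K_{16(t-1)}\oplus K_{21}\oplus(t-1)K_{16,21}$ by splitting $V(K_n)$ into $(t-1)$ sub-blocks of size $16$ and a block of size $21=16+5$.

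With the further decompositions $K_{16s}=sK_{16}\oplus\binom{s}{2}K_{16,16}$ and $K_{16(t-1)}=(t-1)K_{16}\oplus\binom{t-1}{2}K_{16,16}$, Lemma~\ref{1.3} and Theorem~\ref{1.4} supply $L_8$-decompositions of every occurrence of $K_{16}$, $K_{16,16}$, $K_{16,13}$ and $K_{16,21}$ (one checks that none of the bipartite parameter pairs falls into the exceptional list $(8,5)$ or $(4,4r+2)$). The lone factor that resists direct treatment is $K_{21}$, whose $210$ edges are not a multiple of eight. Imitating the $K_{19}=(K_{19}\setminus K_3)\oplus K_3$ trick, I would write $K_{21}=(K_{21}\setminus K_5)\oplus K_5$, with $K_5$ supported on a distinguished five-vertex sub-block $C_2'$ of the $21$-block, and exhibit an $L_8$-decomposition of $K_{21}\setminus K_5$ (which has $200=25\cdot 8$ edges) in an appendix parallel to Appendix~$4.3.1$. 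This takes care of $16s(K_{21}\setminus K_5)$.

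After this bookkeeping the leftover edges can be re-assembled, in direct analogy with the last step of Lemma~\ref{2.6}, as $s(K_{16}\Box K_5)\oplus t(K_{13}\Box K_{16})\oplus(K_{13}\Box K_5)$. The products $K_{16}\Box K_5$ and $K_{13}\Box K_{16}=K_{16}\Box K_{13}$ both fall under Lemma~\ref{2.4} (using commutativity of $\Box$ for the second), leaving $K_{13}\Box K_5$ (on $65$ vertices with $520=65\cdot 8$ edges) as the sole genuine obstruction. The main difficulty of the proof is therefore the pair of explicit appendix constructions, one for $K_{21}\setminus K_5$ and one for $K_{13}\Box K_5$; once these are in hand, the rest of the argument is pure assembly. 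Degenerate parameter values such as $t=1$ (empty $C_1$) are absorbed by reading the empty sub-blocks as contributing nothing, while any true base cases with $s=0$ or $t=0$ would need to be recorded separately.
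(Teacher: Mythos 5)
Your route is genuinely different from the paper's, and as written it has gaps that go beyond the two missing appendix tables. The paper does not imitate Lemma \ref{2.6} here: it partitions $V(K_m)$ into $s$ blocks of size $16$ plus one block of size $13$, and $V(K_n)$ into $t$ blocks of size $16$ plus one block of size $5$, and writes $K_m \Box K_n$ directly as $st(K_{16}\Box K_{16})\oplus t(K_{16}\Box K_{13})\oplus s(K_{16}\Box K_{5})\oplus (K_{13}\Box K_{5})$ together with copies of $K_{16,16}$, $K_{16,13}$ and $K_{16,5}$; the only new ingredients are explicit $L_8$-decompositions of $K_{16}\Box K_{5}$ and $K_{13}\Box K_{5}$ (Appendices $4.4.1$ and $4.4.2$), and no graph of the form $K_{21}\setminus K_5$ ever appears. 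Your plan, by contrast, hinges on an $L_8$-decomposition of $K_{21}\setminus K_5$ ($200$ edges) that you assert but do not construct and that the paper does not supply; until such a table is produced this is a missing ingredient, not bookkeeping. Likewise, $K_{16}\Box K_{5}$ does \emph{not} follow from Lemma \ref{2.4}: the proof of that lemma in the case $n\equiv 5\pmod 8$ writes $n=8t+5$ with $t>0$ and uses $K_{16}\Box K_{13}$ as its smallest building block, so it only reaches $n\ge 13$. This is precisely why the paper records a separate hand-made decomposition of $K_{16}\Box K_{5}$.

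There is also a double-counting problem in your assembly. You decompose \emph{every} column copy of $K_{16s+13}$ as $K_{16s}\oplus K_{13}\oplus sK_{16,13}$ and \emph{every} row copy of $K_{16t+5}$ as $K_{16(t-1)}\oplus K_{21}\oplus (t-1)K_{16,21}$, and then give every resulting $K_{16}$, $K_{16,16}$, $K_{16,13}$, $K_{16,21}$ and $K_{21}\setminus K_5$ an $L_8$-decomposition outright. After that, no $K_{16}$ factors remain from which to build the claimed $s(K_{16}\Box K_{5})$ and $t(K_{13}\Box K_{16})$: what is actually left is one $K_{13}$ in each of the $16t+5$ columns and one $K_5$ in each of the $16s+13$ rows, and for each of the $16t$ columns lying outside the residual $5$-set the corresponding $K_{13}$ is a connected component of this leftover with $78\not\equiv 0\pmod 8$ edges, so the leftover admits no $L_8$-decomposition at all. (Your own count ``$16s(K_{21}\setminus K_5)$'' against $16s+13$ rows is a symptom of the same inconsistency.) To make the reassembly legitimate you must withhold the column-$K_{16}$'s lying over the five residual columns and the row-$K_{16}$'s lying in the thirteen residual rows --- this is what the ``first $16t$ columns / first $16s$ rows'' phrasing in Lemma \ref{2.6} is doing --- and your write-up does not, so the step ``the leftover edges can be re-assembled'' fails as stated.
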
 
\begin{proof}
Let $m = 16s+13$ and $n = 16t+5$ for some $s, t > 0$. Then we can write $K_{m} \Box K_{n}$ as $st\left(K_{16} \Box K_{16}\right)$ $\oplus$ $t\left(K_{16} \Box K_{13}\right)$ $\oplus$ $s\left(K_{16} \Box K_{5}\right)$ $\oplus$ $K_{13} \Box K_{5}$ $\oplus$ $\frac{t(t-1)(16s+13)+s(s-1)(16t+5)}{2}K_{16,16}$ $\oplus$ $s(16t+5)K_{16,13}$ $\oplus$ $t(16s+13)K_{16,5}$. An $L_{8}$-decomposition of $K_{16} \Box K_{5}$ and $K_{13} \Box K_5$ are presented in Appendixes $4.4.1$ and $4.4.2$, respectively and the $L_8$-decomposition of the remaining graphs follows from Lemma \ref{2.4} and Theorem \ref{1.4}. Hence the graph $K_m \Box K_n$ has the desired decomposition.
\end{proof}
%%%%%%%%%%%%%%%%%%%%%%%%%%%%%%%%%%%%%%%%%%%%%%%%%%%%%%%%%%%%%%%%%%
\begin{lem}\label{2.8}
If $m \equiv 11\pmod{16}$ and $n \equiv 7\pmod{16}$, then the graph $K_m \Box K_n$ has an $L_{8}$-decomposition.
\end{lem}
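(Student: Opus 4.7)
The plan is to imitate the block decomposition used in Lemmas \ref{2.6} and \ref{2.7}. Write $m = 16s+11$ and $n = 16t+7$ for some $s,t>0$. Partition $V(K_m)$ into $s$ blocks $A_1,\dots,A_s$ of size $16$ together with one block $B$ of size $11$, and partition $V(K_n)$ into $t$ blocks $C_1,\dots,C_t$ of size $16$ together with one block $D$ of size $7$. The graph $K_m \Box K_n$ then splits cleanly along the rectangles $A_i\times C_j$, $A_i\times D$, $B\times C_j$, and $B\times D$ (each carrying a Cartesian-product subgraph), together with the bipartite ``cross'' edges generated by row-copies of $K_m$ and column-copies of $K_n$ that connect distinct blocks.

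Counting these pieces exactly as in Lemma \ref{2.7}, one obtains the edge-disjoint decomposition
\begin{align*}
K_m \Box K_n \;=\;& st\,(K_{16}\Box K_{16}) \oplus t\,(K_{16}\Box K_{11}) \oplus s\,(K_{16}\Box K_7) \oplus (K_{11}\Box K_7) \\
&\oplus\; \tfrac{t(t-1)(16s+11)+s(s-1)(16t+7)}{2}\,K_{16,16} \oplus s(16t+7)\,K_{16,11} \oplus t(16s+11)\,K_{16,7}.
\end{align*}
By Lemma \ref{2.4}, each of $K_{16}\Box K_{16}$, $K_{16}\Box K_{11}$, and $K_{16}\Box K_7$ has an $L_8$-decomposition (these are the subcases $n\equiv 0, 3, 7\pmod 8$ treated there). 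By Theorem \ref{1.4}, each of $K_{16,16}$, $K_{16,11}$, and $K_{16,7}$ has an $L_8$-decomposition, since in each case the product of the part sizes is a multiple of $8$ and none of the ordered pairs lies in the exceptional list $(8,5),(4,4r+2)$. The boundary situations $s=0$ or $t=0$ cause no trouble, as the coefficient in front of any absent piece vanishes.

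The only remaining term is $K_{11}\Box K_7$, and this is the main obstacle. Neither factor is $\equiv 0\pmod 8$ nor $\equiv 1\pmod{16}$, so none of Lemmas \ref{2.2}--\ref{2.7} or Theorems \ref{1.1}--\ref{1.4} applies; the graph has $77$ vertices each of degree $16$, hence $616$ edges, and a direct construction of $77$ edge-disjoint copies of $L_8$ is required (to appear in a new appendix, analogous to $4.4.1$ and $4.4.2$ used in Lemma \ref{2.7}). I expect the construction to exploit a rotational action: labelling the vertices by pairs in $\mathbb{Z}_{11}\times\mathbb{Z}_7$ and exhibiting a short list of base sunlets whose orbits under a suitable cyclic shift cover every edge exactly once should reduce the verification to a finite check of difference sets, rather than a brute enumeration of all $77$ copies.
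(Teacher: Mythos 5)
Your reduction is exactly the one the paper uses: the same splitting of $K_{16s+11}\Box K_{16t+7}$ into $st(K_{16}\Box K_{16})\oplus t(K_{16}\Box K_{11})\oplus s(K_{16}\Box K_{7})\oplus (K_{11}\Box K_{7})$ together with the same multiplicities of $K_{16,16}$, $K_{16,11}$ and $K_{16,7}$, and the same appeals to Lemma \ref{2.4} and Theorem \ref{1.4} for every piece except $K_{11}\Box K_{7}$. Your arithmetic and your check against the exceptional pairs of Theorem \ref{1.4} are correct, and your observation that the degenerate cases $s=0$ or $t=0$ are harmless is a fair (indeed necessary) remark, since $m=11$, $n=7$ is itself an admissible instance of the lemma.

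The gap is the one you yourself flag: you never actually produce an $L_{8}$-decomposition of $K_{11}\Box K_{7}$, and this is the entire content of the base case --- everything else in the lemma is bookkeeping. The paper closes this by exhibiting an explicit list of $77$ edge-disjoint copies of $L_{8}$ in Appendix $4.5.1$. Your proposed shortcut via a cyclic action is plausible in principle: since $\gcd(11,7)=1$, $K_{11}\Box K_{7}$ is a circulant on $\mathbb{Z}_{77}$ whose connection set is the $16$ nonzero multiples of $7$ and of $11$, giving exactly $8$ difference classes, so a single base sunlet using each difference class once and having a full orbit of length $77$ would suffice. But ``I expect the construction to exploit a rotational action'' is not a construction; until a concrete base block (or an explicit list, as in the paper) is written down and verified, the lemma is not proved. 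As it stands the proposal reduces the problem correctly but leaves its only nontrivial ingredient as an unverified claim.
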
 
\begin{proof}
Let $m = 16s+11$ and $n = 16t+7$ for some $s, t > 0$. Then we can write $K_{m} \Box K_{n}$ as $st\left(K_{16} \Box K_{16}\right)$ $\oplus$ $t\left(K_{16} \Box K_{11}\right)$ $\oplus$ $s\left(K_{16} \Box K_{7}\right)$ $\oplus$ $K_{11} \Box K_{7}$ $\oplus$ $\frac{s(s-1)(16t+7)+t(t-1)(16s+11)}{2}K_{16,16}$ $\oplus$ $(16t+7)sK_{16,11}$ $\oplus$ $(16s+11)tK_{16,7}$. An $L_{8}$-decomposition of $K_{11} \Box K_{7}$ is presented in Appendix $4.5.1$ and the $L_8$-decomposition of the remaining graphs follows from Lemma \ref{2.4} and Theorem \ref{1.4}. Hence the graph $K_m \Box K_n$ has the desired decomposition.
\end{proof}
%%%%%%%%%%%%%%%%%%%%%%%%%%%%%%%%%%%%%%%%%%%%%%%%%%%%%%%%%%%%%%%%%%
\begin{lem}\label{2.9}
If $m \equiv 9\pmod{16}$ and $n \equiv 9\pmod{16}$, then the graph $K_m \Box K_n$ has an $L_{8}$-decomposition.
\end{lem}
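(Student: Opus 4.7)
The plan is to adapt the block-decomposition strategy of Lemmas \ref{2.7} and \ref{2.8}. Writing $m = 16s + 9$ and $n = 16t + 9$ (with $s, t \geq 0$), I partition the coordinate sets of the two factors into groups of size $16$ together with one residual group of size $9$. Collecting the horizontal copies of $K_m$ and the vertical copies of $K_n$ according to these blocks produces the decomposition
\[
K_m \Box K_n \;=\; st\,(K_{16} \Box K_{16}) \;\oplus\; (s+t)\,(K_{16} \Box K_9) \;\oplus\; (K_9 \Box K_9) \;\oplus\; \alpha K_{16,16} \;\oplus\; \beta K_{16,9},
\]
where $\alpha = \tfrac12[s(s-1)(16t+9) + t(t-1)(16s+9)]$ and $\beta = s(16t+9) + t(16s+9)$, with the convention that empty-index terms vanish when $s = 0$ or $t = 0$.

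Each piece on the right, apart from $K_9 \Box K_9$, already has a known $L_8$-decomposition: $K_{16} \Box K_{16}$ by Lemma \ref{2.2}; $K_{16} \Box K_9$ by Lemma \ref{2.4}, Case~1 (ultimately via Appendix 4.2.1); and the bipartite pieces $K_{16,16}$ and $K_{16,9}$ by Theorem \ref{1.4}, since $16\cdot 16$ and $16\cdot 9$ are both divisible by $8$ and neither pair matches an exception. Thus the entire lemma reduces to providing an explicit $L_8$-decomposition of $K_9 \Box K_9$, which would be deferred to a new appendix.

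That appendix is where the real work lies. The graph $K_9 \Box K_9$ has $81$ vertices, is $16$-regular, and has $648 = 8\cdot 81$ edges; under the translation action of $\mathbb{Z}_9 \times \mathbb{Z}_9$ its edge set splits into exactly eight orbits of size $81$, namely the four \emph{horizontal} orbits indexed by unordered row-differences $\{\pm 1\},\{\pm 2\},\{\pm 3\},\{\pm 4\}$ and the four analogous \emph{vertical} orbits. I would construct the $L_8$-decomposition by exhibiting a single base sunlet whose eight edges lie in the eight distinct orbits and then taking the $81$ translates. A concrete candidate is the $C_4$ on $(0,0),(1,0),(8,0),(5,0)$ inside row $0$ (its consecutive row-differences $1,7,6,4$ sum to $0$ modulo $9$ and exhaust the four horizontal orbits), together with the pendants $(0,1),(1,2),(8,3),(5,4)$ supplying one representative of each vertical orbit. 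Since the stabilizer of any single edge under $\mathbb{Z}_9 \times \mathbb{Z}_9$ is trivial, the $81$ translates of this base sunlet are automatically edge-disjoint and together exhaust $E(K_9 \Box K_9)$, which is the only non-routine step in the argument.
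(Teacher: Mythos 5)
Your reduction of $K_{16s+9}\Box K_{16t+9}$ to $st(K_{16}\Box K_{16})\oplus(s+t)(K_{16}\Box K_9)\oplus(K_9\Box K_9)\oplus\alpha K_{16,16}\oplus\beta K_{16,9}$, with the same multiplicities $\alpha$ and $\beta$, is exactly the decomposition used in the paper's proof of Lemma \ref{2.9}, and your appeals to Lemmas \ref{2.2}, \ref{2.4} and Theorem \ref{1.4} for the known pieces match the paper's (which, incidentally, contains a typo: it cites ``$K_{11}\Box K_7$'' where it means $K_9\Box K_9$, the graph actually treated in Appendix 4.6.1). Where you genuinely diverge is the base case $K_9\Box K_9$: the paper disposes of it by an explicit list of sunlets in Appendix 4.6.1, whereas you give a cyclic (base-block/difference) construction. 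Your construction checks out: the $C_4$ on $(0,0),(1,0),(8,0),(5,0)$ has consecutive first-coordinate differences $1,7,6,4$, which sum to $0$ and represent the four classes $\{\pm1\},\{\pm2\},\{\pm3\},\{\pm4\}$, the pendant edges realize the four second-coordinate classes $1,2,3,4$, and since $9$ is odd no nonzero $d$ satisfies $d=-d$, so edge stabilizers under $\mathbb{Z}_9\times\mathbb{Z}_9$ are trivial and the $81$ translates partition the $648$ edges. Your route buys a short, verifiable proof of the base case (and you are slightly more careful than the paper in noting that the formula degenerates correctly when $s=0$ or $t=0$, i.e.\ when $m=9$ or $n=9$); what it costs is only that the construction is ad hoc to $K_9\Box K_9$, which is also true of the paper's table. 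Both arguments are sound; yours replaces an unverifiable-by-inspection appendix with a two-line certificate.
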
 
\begin{proof}
Let $m = 16s+9$ and $n = 16t+9$ for some $s, t > 0$. Then we can write $K_{m} \Box K_{n}$ as $st\left(K_{16} \Box K_{16}\right)$ $\oplus$ $(s+t)\left(K_{16} \Box K_{9}\right)$ $\oplus$ $K_{9} \Box K_{9}$ $\oplus$ $\frac{s(s-1)(16t+9)+t(t-1)(16s+9)}{2}K_{16,16}$ $\oplus$ $\left[s(16t+9)+t(16s+9)\right]K_{16,9}$. An $L_{8}$-decomposition of $K_{11} \Box K_{7}$ is presented in Appendix $4.6.1$ and the $L_8$-decomposition of the remaining graphs follows from Lemma \ref{2.4} and Theorem \ref{1.4}. Hence the graph $K_m \Box K_n$ has the desired decomposition.
\end{proof}
%%%%%%%%%%%%%%%%%%%%%%%%%%%%%%%%%%%%%%%%%%%%%%%%%%%%%%%%%%%%%%%%%%
\subsection{Main Theorem}
Combining the results from Lemma \ref{2.1} to Lemma \ref{2.9}, we get the following main results.
\begin{thm}
The graph $K_m \Box K_n$ admits an $L_{8}$- decomposition if and only if one of the following holds:
\begin{enumerate}
 	\item $m, n \equiv \ 0 \ (mod \ 4)$ 
	\item $m \equiv \ 0 \ (mod \ 8), \ n \equiv \ 0 \ (mod \ 2)$
	\item $m \equiv \ 0 \ (mod \ 16)$
	\item $m \equiv \ 1 \ (mod \ 16), \ n \equiv \ 1 \ (mod \ 16)$ 
	\item $m \equiv \ 15 \ (mod \ 16), \ n \equiv \ 3 \ (mod \ 16)$
	\item $m \equiv \ 13 \ (mod \ 16), \ n \equiv \ 5 \ (mod \ 16)$
	\item $m \equiv \ 11 \ (mod \ 16), \ n \equiv \ 7 \ (mod \ 16)$
	\item $m \equiv \ 9 \ (mod \ 16), \ n \equiv \ 9 \ (mod \ 16)$
\end{enumerate}
\end{thm}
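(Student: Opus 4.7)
The main theorem is simply the conjunction of Lemma \ref{2.1} (necessity) with Lemmas \ref{2.2}--\ref{2.9} (sufficiency), so my plan is to present the proof as a short case analysis that dispatches each direction to the appropriate lemma.

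For the necessity direction I would repeat, in one sentence, the edge-count argument already given in Lemma \ref{2.1}: the graph $K_m \Box K_n$ is $(m+n-2)$-regular on $mn$ vertices, so it has $\tfrac{mn(m+n-2)}{2}$ edges; since an $L_{8}$-decomposition forces $8 \mid \tfrac{mn(m+n-2)}{2}$, i.e.\ $16 \mid mn(m+n-2)$, a direct check of the residues of $m,n$ modulo $16$ shows that one of the eight stated congruence patterns must hold. The key observation is that these eight cases are mutually exclusive only up to the symmetry $m \leftrightarrow n$, so it suffices to verify divisibility in each.

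For the sufficiency direction I would list the eight cases and in each case invoke precisely one of Lemmas \ref{2.2}--\ref{2.9}. The plan is to observe that every congruence pattern in the theorem statement matches, word for word, the hypothesis of one of these lemmas, so the proof reduces to citation. Concretely, case (1) is Lemma \ref{2.2}, case (2) is Lemma \ref{2.3}, case (3) is Lemma \ref{2.4}, case (4) is Lemma \ref{2.5}, case (5) is Lemma \ref{2.6}, case (6) is Lemma \ref{2.7}, case (7) is Lemma \ref{2.8} and case (8) is Lemma \ref{2.9}.

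Since all the technical work has already been carried out in the preceding lemmas (each of which reduces $K_m \Box K_n$ to copies of $K_{16} \Box K_k$, $K_{8,n}$, $K_{16,16}$ and $K_{16p+1}$ whose $L_{8}$-decompositions are secured by Theorems \ref{1.2}, \ref{1.4} and the appendices), no new obstacle arises here. The only thing I would be slightly careful about is the ``some $s,t > 0$'' hypothesis implicit in the constructive lemmas: small boundary values such as $s=0$ or $t=0$ should be checked to see that they are either already covered by a lower-case of the same lemma or fall under one of the other seven congruence cases. Once that book-keeping is verified, the theorem follows immediately by combining Lemma \ref{2.1} with Lemmas \ref{2.2}--\ref{2.9}.
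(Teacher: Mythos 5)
Your proposal matches the paper's own proof, which is exactly the one-line combination of Lemma \ref{2.1} (necessity) with Lemmas \ref{2.2}--\ref{2.9} (sufficiency), case by case as you list. Your additional remark about checking the boundary values $s=0$ or $t=0$ is a sensible precaution that the paper itself does not spell out, but it does not change the structure of the argument.
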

%%%%%%%%%%%%%%%%%%%%%%%%%%%%%%%%%%%%%%%%%%%%%%%%%%%%%%%
\section{Acknowledgments:}
The first author thank the Department of Science and Technology, Government of India, New Delhi for its financial support through the Grant No.DST/INSPIRE Fellowship/2015/IF150211. The second author thank the University Grant Commission, Government of India, New Delhi for its support through the grant No.F.510/7/DRS-I/2016(SAP-I).
%%%%%%%%%%%%%%%%%%%%%%%%%%%%%%%%%%%%%%%%%%%%%%%%%%%%
\section{Appendix}
\subsection{$L_8$-decomposition required for Lemma \ref{2.3}}
\subsubsection{An $L_{8}$- decomposition of $K_{8} \Box K_{10}$} 
{\small $% [inline block 0: 504 envs, 74207 chars -> data_tex | \begin{pmatrix}x_{i}^{1} & x_{i}^{6} & x_{i}^{2} & x_{i}^{7} \\ x_{i}^{3} & x_{i}^{8} & x_{i}^{4} & x_{i}^{9}\end{pmatri...]
$.}
%%%%%%%%%%%%%%%%%%%%%%%%%%%%%%%%%%%%%%%%%%%%%%%%%%%%%%%%%%%%%%%%%%%%%%%%%%%%%%%%%%%%%%%%%%%%%%%%%%%%%%%%%%%%%%%%%%%%%%%%%%%%%%5

%%%%%%%%%%%%%%%%%%%%%%%%%%%%%%%%%%%%%%%%%%%%%%%%%%%%%%%%%%%%%%%%%%%%%%%%%%%%%%%%%%%%%%%%%%%%%%%%%%%%%%%%%%%%%%%%%%%%%%%%%%%%%%%%%%%%%%%%%%%%%%%%%%%%%%%%%%%%%%%%%%%%%%%%%%%%%%%%%%%%%%%%%%%%%%%%%%%%
\end{document}